\def\corcommstyle{\bf\small\tt}
\def\corrl #1<<#2||#3>>{
\if\visiblecomments y
  \begin{quote} {\corcommstyle $<<$COMMENT$>>$ #1\marginpar{!!}\\$<<$OLD$<<$} \end{quote}
  #2
  \begin{quote} {\corcommstyle $==$NEW$==$ } \end{quote}
  #3
  \begin{quote} {\corcommstyle $>>$END$>>$ } \end{quote}
 \else
  #3
 \fi
}
\long\def\longcorrl #1<<#2||#3>>{
\if\visiblecomments y
  \begin{quote} {\corcommstyle $<<$COMMENT$>>$ #1\marginpar{!!}\\$<<$OLD$<<$} \end{quote}
  #2
  \begin{quote} {\corcommstyle ==NEW== } \end{quote}
  #3
  \begin{quote} {\corcommstyle $>>$END$>>$ } \end{quote}
 \else
  #3
 \fi
}
\def\corrq #1<<#2>>{
\if\visiblecomments y
  \begin{quote} {\corcommstyle $<<$COMMENT$>>$ #1\marginpar{!!}\\$<<$BEG$<<$} \end{quote}
  #2
  \begin{quote} {\corcommstyle $>>$END$>>$ } \end{quote}
 \else
  #2
 \fi
}
\long\def\longcorrq #1<<#2>>{
\if\visiblecomments y
  \begin{quote} {\corcommstyle $<<$COMMENT$>>$ #1\marginpar{!!}\\$<<$BEG$<<$} \end{quote}
  #2
  \begin{quote} {\corcommstyle $>>$END$>>$ } \end{quote}
 \else
  #2
 \fi
}
\def\corrc #1<<>>{
\if\visiblecomments y
  \begin{quote} {\corcommstyle $<<$COMMENT$>>$ #1\marginpar{!!}} \end{quote}
\fi
}
\def\corrse #1<<>>{
\if\visiblecomments y
  \begin{quote} {\corcommstyle $<<$SECOND EDITION$>>$ #1\marginpar{!!}} \end{quote}
\fi
}
\def\corre #1<<#2||#3>>{
\if\visiblecomments y
  #3\marginpar{\corcommstyle #1}
 \else
  #3
 \fi
}
\long\def\longcorre #1<<#2||#3>>{
\if\visiblecomments y
  #3\marginpar{\corcommstyle #1}
 \else
  #3
 \fi
}
\def\corrs #1<<#2||#3>>{
\if\visiblecomments y
  #3\marginpar{\corcommstyle #2 $\rightarrow$ #3\\ #1}
 \else
  #3
 \fi
}
\def\corro #1<<#2||#3>>{
#2}
\def\corrn #1<<#2||#3>>{
#3}
\long\def\longcorro #1<<#2||#3>>{
#2}
\long\def\longcorrn #1<<#2||#3>>{
#3}
\long\def\underconstruction #1<<<#2>>>{
\if\visiblecomments y
  \begin{quote} {\corcommstyle $<<$UNDER CONSTRUCTION - BEGIN$>>$ #1\marginpar{!!}} \end{quote}
  #2
  \begin{quote} {\corcommstyle $>>$UNDER CONSTRUCTION - END$>>$ } \end{quote}
 \else
 \fi
}
\def\showcomments{
  \let\visiblecomments y
}
\def\hidecomments{
  \let\visiblecomments n
}
\def\refeq#1{\if\workingver y(\ref{#1})-[[#1]]\else(\ref{#1})\fi}
\def\refth#1{\if\workingver y\ref{#1}-[[#1]]\else\ref{#1}\fi}
\def\mylabel#1{\if\workingver y\label{#1}{\bf\ \ [[#1]]\ \ }\else\label{#1}\fi}
\def\mybibitem#1{\if\workingver y\bibitem{#1}{\bf\ \ [[#1]]\ \
}\else\bibitem{#1}\fi}
\newfont{\msam}{msam10}
\newfont{\msbm}{msbm10}
\def\articletheorems{
\newtheorem{thm}{Theorem}[section]
\newtheorem{lem}[thm]{Lemma}

\newtheorem{cor}[thm]{Corollary}

\newtheorem{ex}[thm]{Example}
\newtheorem{algo}{Algorithm}[section] % For algorithms in tables
  % For algorithms (MM)

}
\def\cK{\text{$\mathcal K$}}
\def\cS{\text{$\mathcal S$}}
\def\cT{\text{$\mathcal T$}}
\newcommand{\cl}{\operatorname{cl}}
\newcommand{\opn}{\operatorname{opn}}
\newcommand{\card}{\operatorname{card}}
\newcommand{\im}{\operatorname{im}}
\def\begeq#1{\begin{equation}\mylabel{#1}}
\def\endeq{\end{equation}}
\def\mathobj#1{\mbox{$#1$}}
\def\ZZ{\mathobj{\mathbb{Z}}}
\def\abs#1{|#1|}
\def\setof#1{\mbox{$\{\,#1\,\}$}}
\def\0#1{\hbox{\kern25pt}$ #1 $\\}
\def\1#1{\hbox{\kern40pt}$ #1 $\\}
\def\2#1{\hbox{\kern55pt}$ #1 $\\}
\def\3#1{\hbox{\kern70pt}$ #1 $\\}
\newcounter{li}
\def\begalg#1{\begin{algo}\mylabel{#1}\normalshape:\small\baselineskip 10pt\\}
\def\endalg{\end{algo}}
\def\Figures(include=#1,cat=#2){
  \renewcommand{\textfraction}{.20}
  \renewcommand{\topfraction}{.80}
  \renewcommand{\bottomfraction}{.80}
  \renewcommand{\floatpagefraction}{.80}
  \newcount\figcount
  \figcount=0
%  \message{Top fraction is: \topfraction}
  \let\includefigures=#1
  \def\figcat{#2}
}
\def\FigureFromFile[#1][#2](#3)#4
\def\FigureFromFileTwoD[#1][#2,#3](#4)#5
\def\FigureF<#1>[#2](#3)#4
\def\Figure[#1](#2)#3
\let\visiblecomments y
\def\adhl{\prec}
\def\PreOrd{\textsc{PreOrd}}
\def\POrd{\textsc{PartOrd}}
\def\Top{\textsc{Top}}
\newcommand{\mo}{\operatorname{mo}}
\newtheorem*{theorem*}{Theorem}
\begin{document}

\author{Jacek Kubica}
\address{Jacek Kubica,
  Division of Computational Mathematics,
  Faculty of Mathematics and Computer Science,
  Jagiellonian University, ul.~St. \L{}ojasiewicza 6, 30-348~Krak\'ow, Poland
}
\email{jacek.kubica@student.uj.edu.pl}
\author{Marian Mrozek}
\address{Marian Mrozek,
  Division of Computational Mathematics,
  Faculty of Mathematics and Computer Science,
  Jagiellonian University, ul.~St. \L{}ojasiewicza 6, 30-348~Krak\'ow, Poland
}
\email{Marian.Mrozek@ii.uj.edu.pl}
\thanks{This research is partially supported
       by the Polish National Science Center under Ma\-estro Grant No. 2014/14/A/ST1/00453.}
\subjclass[2010]{Primary {\bf 55N10, 55U15, 18G35}; Secondary {\bf 05E45, 06A06, 06A07}}
\keywords{finitely generated free chain complex, Lefschetz complex, finite topological space, singular homology}

\title[Lefschetz Complexes as Finite Topological Spaces]{Lefschetz Complexes as Finite Topological Spaces}

\begin{abstract}
   We consider a fixed basis of a finitely generated free chain complex
   as a finite topological space and we present a sufficient condition
   for the singular homology of this space to be isomorphic with the homology of the chain complex.
\end{abstract}

\maketitle

\section{Introduction}

The combinatorial Morse theory, originally developed for finite CW complexes by R. Forman \cite{Fo98a} in 1998,
was generalized to the purely algebraic setting of finitely generated free chain complexes with a distinguished basis
by Kozlov \cite{Ko} in 2005, and independently by Skj\"oldberg \cite{Sk} in 2006 and  J\"ollenbeck and Welker \cite{JW} in 2009.
A finitely generated free chain complex with a distinguished basis was already studied by Lefschetz in 1942.
However, in the original Lefschetz definition \cite[Chpt. III, Sec. 1, Def. 1.1]{Le1942}
the elements of the basis are the primary objects of interest
and the algebraic structure of the associated chain complex is given on top of them (see Section~\ref{sec:lefschetz} for a precise definition).

Thus, the Lefschetz complex consists of a finite collection of cells $X$ graded by dimension
and the {\em incidence coefficient} $\kappa(x,y)$ encoding the incidence relation
between cells $x,y\in X$.
This way of viewing a free chain complex with a distinguished basis
is convenient in the algorithmic context, because a computer may store and process only finite sets. In particular, in recent years, Lefschetz complexes became an object of interest as a convenient tool in computational dynamics and topology \cite{HMMN, HMS, MM}.
As examples of Lefschetz complexes let us mention: the family $K$ of all simplices of a simplicial complex \cite[Definition 11.8]{KMM2004},
all elementary cubes in a cubical set \cite[Definition 2.9]{KMM2004}
or, more generally, cells of a cellular complex (finite CW complex, see \cite[Section IX.3]{Ma1991}).
A non-zero value of $\kappa(x,y)$
indicates that the cell $y$ is in the boundary of the cell $x$ and the dimension
of $y$ is one less than the dimension of $x$. The cell $y$ is then called a {\em facet} of $x$.
The linear combination of facets of a cell $x$ with incidence coefficients as coefficients is the boundary of $x$.
Under suitable assumptions this defines the associated free chain complex with basis $X$ and the homology of $X$
which we will refer to as the Lefschetz homology of $X$.

The relation $y\adhl_{\kappa} x$ defined by $\kappa(x,y)\neq 0$ extends to a partial order $\leq_\kappa$ on $X$.
Hence, by Alexandroff Theorem \cite{Al1937}, a Lefschetz complex is also a finite topological space. Therefore, also its singular homology is well defined. This leads to a natural question under what conditions the Lefschetz homology is isomorphic to the singular homology. In this paper we give a partial answer to this question by proving the following theorem (see Theorem \ref{thm:main_thm} for the precise formulation).

\begin{theorem*}
Let $(X, \kappa)$ be an augmentable Lefschetz complex such that the Lefschetz homology of the closure of every point in $X$ coincides with the singular homology of the one-point space. Then the Lefschetz homology and the singular homology of $X$ are isomorphic.
\end{theorem*}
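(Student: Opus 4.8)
The plan is to establish a comparison between the Lefschetz homology and the singular homology by realizing both as the homology of a suitable chain complex and then constructing a chain homotopy equivalence. Since $X$ with the partial order $\leq_\kappa$ is a finite topological space by the Alexandroff correspondence, its singular homology agrees with the homology of its order complex (the nerve / simplicial complex of chains in the poset), by the standard result that a finite $T_0$-space is weakly homotopy equivalent to the geometric realization of its order complex. So the first step is to replace singular homology by the simplicial homology of the order complex $\Delta(X)$ of the poset, reducing the problem to comparing the Lefschetz chain complex of $(X,\kappa)$ with the simplicial chain complex of $\Delta(X)$.

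The second step is to exploit the local hypothesis. The assumption is that for each point $x\in X$, the Lefschetz homology of the closure $\Cl x$ (which in the Alexandroff topology is the down-set $\{y : y \leq_\kappa x\}$) agrees with the homology of a point. I would interpret this as an acyclicity condition that will let me run a Mayer--Vietoris or spectral-sequence argument, or more cleanly a filtration argument. The natural filtration is by the skeleta of $X$ under the grading by dimension, or by a linear extension of the partial order. The idea is that the closure of a point is precisely the ``local'' piece controlled by a single generator, so the hypothesis says each such local piece is Lefschetz-acyclic, matching the fact that each closed star / order-complex cone is contractible and hence singular-acyclic.

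The cleanest route is therefore an acyclic-carrier or double-complex comparison: I would build a carrier assigning to each cell $x$ the closure $\Cl x$, which is acyclic in Lefschetz homology by hypothesis and contractible (a cone) in the order complex, so both chain complexes admit compatible carriers with acyclic values. By the Acyclic Carrier Theorem applied in both directions, one obtains chain maps between the Lefschetz complex and the simplicial chain complex of $\Delta(X)$ whose compositions are chain homotopic to the identity, yielding the isomorphism on homology. The augmentability hypothesis is what guarantees the carrier machinery extends to the reduced/augmented complexes so that the point-case normalization is consistent across all dimensions.

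The hard part will be verifying that the local acyclicity hypothesis propagates to a \emph{global} comparison — i.e.\ turning the pointwise statement ``$\Cl x$ is acyclic'' into a chain-level equivalence over all of $X$. Concretely, the main obstacle is constructing the comparison chain maps so that they are compatible with the boundary operators $\kappa$ on one side and the simplicial boundary on the other, and then producing the chain homotopy; this is exactly where one must feed the closure-acyclicity into an inductive argument over the filtration, handling one cell (or one order-level) at a time and checking that the induced maps on the relative/local homology of each $\Cl x$ are isomorphisms. I expect the bookkeeping of the incidence coefficients $\kappa$ versus simplicial signs, together with ensuring the augmentation is respected at the bottom of the filtration, to be the technically delicate point.
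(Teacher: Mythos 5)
Your plan is workable and, in its second half, genuinely different from the paper's argument. Both you and the paper begin the same way: reduce singular homology to the order complex via McCord, and construct a comparison chain map out of the Lefschetz complex by an acyclic-carrier-type argument, with carrier $x\mapsto \cl x$, using that $\cl x$ is a cone (contractible) on the finite-space/order-complex side; the paper's Lemma~\ref{lem:main_lemma} is exactly this and is explicitly noted to follow from the Acyclic Carrier Theorem. Where you diverge is in proving that this map is a homology isomorphism. The paper constructs only the one chain map $\varphi$ and then inducts on $\card X$, peeling off a maximal cell $a$, comparing the long exact sequences of $(X, X\setminus\{a\})$ on both sides via excision (simplicial excision plus relative McCord on one side, Theorem~\ref{thm:lefschetz_excision} on the other) and invoking the Five Lemma twice; the hypothesis~(\ref{eq:main-thm-eqn}) enters only in identifying the relative term $H^\kappa(\cl a,\mo a)$. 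You instead propose running the carrier machinery in both directions and concluding a chain homotopy equivalence. That works, and in fact yields a stronger conclusion than the paper's, but you should make the reverse carrier explicit: it is defined on simplices $\sigma=(x_0<\dots<x_k)$ of the order complex by $\sigma\mapsto C^\kappa(\cl x_k)$, and its acyclicity is precisely where the hypothesis on $H^\kappa(\cl x)$ is consumed --- note that the Acyclic Carrier Theorem needs \emph{reduced} acyclicity, so you must combine $H_0^\kappa(\cl x)\cong R$ with augmentability to see that $\epsilon^\kappa$ is an isomorphism on $H_0^\kappa(\cl x)$. The composites are then carried by $x\mapsto C^\kappa(\cl x)$ and $\sigma\mapsto C(\mathcal{K}(\cl x_k))$ respectively, both of which also carry the identity, giving the two homotopies. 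Your approach buys a cleaner, non-inductive global statement (a chain equivalence) at the cost of checking the augmented-carrier hypotheses in both directions; the paper's approach avoids constructing the reverse map and the homotopies, at the cost of the excision and Five Lemma bookkeeping. Neither the Mayer--Vietoris nor the spectral-sequence variants you float are needed, and you should commit to the carrier route rather than leaving the choice open.
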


The organization of the paper is as follows.
In Section 2  we recall some preliminary notions and theorems about chain complexes, finite topological spaces and Lefschetz complexes.
In Section 3 we state and prove the main result of this paper.

\section{Preliminaries}
In this section we recall definitions and results needed in the sequel and we fix the notation.
\subsection{Chain complexes.}

Recall that given a graded free module $C =  (C_n)_{n \in \mathbb{Z}}$ (i.e. $C_n$ is a free module for each $n \in \mathbb{Z}$) and a sequence $\partial = (\partial_n : C_n \rightarrow C_{n-1})_{n \in \mathbb{Z}}$ such that for each $n \in \mathbb{Z}$ we have $\partial_n \partial_{n-1} = 0$, the pair $(C, \partial)$ is called a \emph{chain complex} and $\partial$ is called the associated \emph{boundary map}.
\par
Given a chain complex $(C, \partial)$  and an $n \in \mathbb{Z}$ we write $B_n(C) := \im \partial_{n + 1}$ for the $n$\emph{-boundaries} and $Z_n(C) := \ker \partial_n$ for the $n$\emph{-cycles}. The fact that $B_n(C)$ is a subgroup of $Z_n(C)$ lets us define the $n$\emph{-th homology group} as $H_n(C) := Z_n(C) / B_n(C)$.
\par
Let $(C, \partial)$ and  $(C', \partial')$ be chain complexes. A sequence $(\varphi_n: C_n \rightarrow C'_n)_{n \in \mathbb{Z}}$ of maps is called a \emph{chain map} if $\varphi_{n-1} \partial_n = \partial_n \varphi_n$ for each $n \in \ZZ$.
\par 
In the sequel we drop the subscripts in boundary maps and chain maps whenever they are clear from the context.
\par 
The following excision theorem for simplicial complexes will be used in the sequel.

\begin{thm} \label{thm:excision_simplicial}
\cite[Ch. V, Sec. 2, Thm. 4]{Duda}
If $\cK, \cK_1, \cK_2$ are simplicial complexes such that $\cK = \cK_1 \cup \cK_2$, then $H(|\cK_2|, |\cK_1 \cap \cK_2|) = H(|\cK|, |\cK_1|)$.
\end{thm}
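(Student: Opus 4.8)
The plan is to prove the theorem by induction on the number of cells of $X$, comparing the Lefschetz homology $H^{\kappa}_*$ with the singular homology $H^{\mathrm{sing}}_*$ through a filtration by closed subcomplexes and matching the resulting long exact sequences by the five lemma. Throughout I identify the singular homology of every closed subset $A\subseteq X$ with the simplicial homology of the order complex $\mathcal{K}(A)$, whose simplices are the chains of $(A,\leq_\kappa)$: McCord's theorem provides a weak homotopy equivalence $|\mathcal{K}(A)|\to A$, so $H^{\mathrm{sing}}_*(A)\cong H_*(|\mathcal{K}(A)|)$ naturally with respect to inclusions of closed subsets. The augmentability of $(X,\kappa)$ is used to pass to reduced Lefschetz homology $\widetilde{H}^{\kappa}_*$, so that the hypothesis reads $\widetilde{H}^{\kappa}_*(\Cl x)=0$ for every $x\in X$.

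For the inductive step choose a cell $x$ that is maximal in $(X,\leq_\kappa)$ and set $X':=X\setminus\{x\}$. Since $x$ is maximal, $X'$ is a down-set, hence closed, and it inherits the structure of an augmentable Lefschetz complex whose point-closures are still acyclic, because the closure of any $y\neq x$ agrees whether computed in $X$ or in $X'$. The same remarks apply to the mouth $\mo x:=\Cl x\setminus\{x\}$, which is again a closed proper subcomplex. Thus the inductive hypothesis furnishes isomorphisms for $X'$ and for $\mo x$. Placing the Lefschetz and singular long exact sequences of the pair $(X,X')$ in a ladder whose outer verticals are the inductive isomorphisms for $X'$, the five lemma reduces the claim to identifying the relative terms $H^{\kappa}_*(X,X')$ and $H^{\mathrm{sing}}_*(X,X')$ compatibly.

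On the Lefschetz side the relative complex $C(X)/C(X')$ is free on the single generator $x$ (no $x'\in X'$ has $x$ as a facet), so $H^{\kappa}_*(X,X')$ is $R$ in degree $\dim x$ and $0$ elsewhere. On the singular side I apply the simplicial excision theorem (Theorem~\ref{thm:excision_simplicial}) to $\mathcal{K}(X)=\mathcal{K}(X')\cup\mathcal{K}(\Cl x)$, whose intersection is $\mathcal{K}(\mo x)$, obtaining $H^{\mathrm{sing}}_*(X,X')\cong H_*\bigl(|\mathcal{K}(\Cl x)|,|\mathcal{K}(\mo x)|\bigr)$. Since $x$ is the maximum of $\Cl x$, the order complex $\mathcal{K}(\Cl x)$ is the cone with apex $x$ over $\mathcal{K}(\mo x)$ and is contractible, so the long exact sequence of the pair gives $H^{\mathrm{sing}}_*(X,X')\cong\widetilde{H}_{*-1}(|\mathcal{K}(\mo x)|)\cong\widetilde{H}^{\mathrm{sing}}_{*-1}(\mo x)$, and the inductive hypothesis rewrites this as $\widetilde{H}^{\kappa}_{*-1}(\mo x)$. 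Here the hypotheses of the theorem enter decisively: from $\widetilde{H}^{\kappa}_*(\Cl x)=0$ and $H^{\kappa}_*(\Cl x,\mo x)\cong R$ concentrated in degree $\dim x$, the reduced long exact sequence of $(\Cl x,\mo x)$ forces $\widetilde{H}^{\kappa}_{\dim x-1}(\mo x)\cong R$ and $\widetilde{H}^{\kappa}_{j}(\mo x)=0$ otherwise, i.e.\ the mouth carries the reduced homology of a $(\dim x-1)$-sphere. Hence $H^{\mathrm{sing}}_*(X,X')$ is $R$ in degree $\dim x$ and $0$ elsewhere, matching the Lefschetz relative homology.

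The delicate point, which I expect to be the main obstacle, is upgrading these corner-by-corner isomorphisms to a genuinely commuting ladder, since the Lefschetz complex has one generator per cell whereas $\mathcal{K}(X)$ has one per chain, so there is no tautological chain-level comparison. I would fix, once and for all, a natural comparison transformation between $H^{\kappa}_*$ and $H^{\mathrm{sing}}_*$ on the category of closed subcomplexes — built at the chain level through the order complex and extended cell by cell over each newly attached $x$ using the acyclicity of $\Cl x$ — so that the squares involving inclusion-induced maps and connecting homomorphisms commute, not merely have isomorphic corners. The substantive verification is that under this comparison the connecting homomorphism $H^{\mathrm{sing}}_{\dim x}(X,X')\to H^{\mathrm{sing}}_{\dim x-1}(X')$ sends the distinguished generator to the class of $\partial x=\sum_y\kappa(x,y)\,y$; that is, the topological attaching degree of $x$ along each facet $y$ reproduces the incidence coefficient $\kappa(x,y)$. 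This is exactly where the acyclicity assumption does its work, since the generator of $\widetilde{H}^{\kappa}_{\dim x-1}(\mo x)$ is represented by the cycle $\partial x$, and transporting it through the inductive isomorphism for $\mo x$ matches it with the singular fundamental class of the sphere $|\mathcal{K}(\mo x)|$, pinning down the connecting maps on both sides and closing the induction.
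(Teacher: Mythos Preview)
Your proposal does not address the stated theorem at all. Theorem~\ref{thm:excision_simplicial} is the classical simplicial excision statement: if $\cK=\cK_1\cup\cK_2$ are simplicial complexes, then the inclusion induces an isomorphism $H(|\cK_2|,|\cK_1\cap\cK_2|)\cong H(|\cK|,|\cK_1|)$. The paper does not prove this result; it is quoted from \cite{Duda} as a standard tool and used later in the proof of Theorem~\ref{thm:main_thm}. What you have written is instead a proof sketch of that main theorem, comparing Lefschetz and singular homology of an augmentable Lefschetz complex via a filtration and the five lemma. None of your inductive filtration, five-lemma ladder, or mouth-of-a-cell computation bears on the simplicial excision statement; indeed, in your third paragraph you explicitly \emph{invoke} Theorem~\ref{thm:excision_simplicial} as an ingredient, which would make the argument circular if it were meant to establish that theorem.

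If your intent was to prove the excision theorem itself, the argument is purely at the simplicial chain level and has nothing to do with Lefschetz complexes, McCord's theorem, or induction on cells. One observes that the inclusion of relative simplicial chain complexes
\[
C_*(\cK_2)/C_*(\cK_1\cap\cK_2)\;\longrightarrow\;C_*(\cK)/C_*(\cK_1)
\]
is already an isomorphism of chain complexes: every simplex of $\cK$ lies in $\cK_1$ or in $\cK_2$, so the right-hand quotient is free on the simplices of $\cK_2\setminus\cK_1=\cK_2\setminus(\cK_1\cap\cK_2)$, which is exactly the basis of the left-hand quotient. Passing to homology and identifying simplicial with singular homology of the polyhedra gives the result.
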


\subsection{Finite topological spaces.}
Recall that a \emph{finite topological space} is a pair  $(X, \cT)$ such that $X$ is a finite set and $\cT$ is a topology on $X$.
\par 
Note that, unlike for the general topological spaces, if $(X, \cT)$ is a finite topological space and $A \subset X$, then $$\opn _{\cT} A := \bigcap \{ U \in \cT \mid A \subset U \}$$ is open.
\par
To keep the notation simple we write $\cl_\cT x := \cl_\cT \{ x \}, \opn_\cT x := \opn_\cT \{ x \}$ for any $x \in X$. Also, whenever topology $\cT$ is clear from the context, we drop the subscript $\cT$ in $\cl_\cT$ and $\opn_\cT$.

Consider a finite set $X$. Denote by $\PreOrd(X)$ the family of all preorders on $X$,
by $\POrd(X)$ the family of all partial orders on $X$, by $\Top(X)$ the family of all topologies on $X$ and by $\Top_0(X)$ the subfamily of $T_0$ topologies on $X$. For a preorder $\le$ in $\PreOrd(X)$ and a subset $A \subset X$ the set $A^\le := \{ x \in X \mid \exists a\in A\;x\leq a \}$ is called the \emph{lower set of $A$} and $A^\ge := \{ x \in X \mid \exists a\in A\;a\leq x \}$ is called the \emph{upper set of $A$}. For convenience we set $x^\le := \{ x \} ^\le$ and $x^\ge := \{ x \} ^\ge$. We say that $A$ is an \emph{upper (respectively lower) set} if $A = A^\le$ (respectively $A = A^\ge$).
\par
For a preorder $\le$ in $\PreOrd(X)$ set
\[
  \cT_\le:=\setof{A\subset X\mid \text{ $A$ is an upper set with respect to $\le$}}
\]
and for a topology $\cT\in\Top(X)$ set
\[
   \le_\cT:=\setof{(x,y)\in X^2\mid x\in\cl_{\cT}y}.
\]

Finite topological spaces can be characterized in terms of preorders as in the following theorem which goes back to P.S.~Alexandroff~\cite{Al1937}.
\begin{thm}
\label{thm:alexandroff}
  Let $X$ be a finite set. We have the following properties.
\begin{itemize}
   \item[(i)] For every preorder $\le$ in $\PreOrd(X)$ the family $\cT_\le$ is a topology on $X$. Moreover,
   for every $x\in X$
\begin{equation}
  x^\ge=\opn_{\cT_\le}x.
\end{equation}
   \item[(ii)] For every topology $\cT\in\Top(X)$ the relation $\ge_{\cT}$ defined by $x\in\cl y$ is a preorder on $X$.
   Moreover, for any $x\in X$
\begin{equation}
 x^{\ge_\cT}=\opn_{\cT}x.
\end{equation}
   \item[(iii)] The  maps
\begin{eqnarray*}
  \alpha: \PreOrd(X)\ni \le&\mapsto&\cT_{\le}\in\Top(X),\\
  \beta: \Top(X)\ni\cT&\mapsto&\le_{\cT}\in \PreOrd(X)
\end{eqnarray*}
   are mutually inverse bijections under which the partial orders on $X$ correspond to $T_0$ topologies on $X$.
\end{itemize}
\end{thm}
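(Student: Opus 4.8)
The plan is to dispatch the routine closure checks in (i) and (ii) quickly and to isolate the two identities relating order-theoretic hulls to minimal open sets, from which the bijection in (iii) follows almost formally. For (i), I would first note that $\emptyset$ and $X$ are upper sets, and that both arbitrary unions and arbitrary intersections of upper sets are again upper sets; each is an immediate consequence of the defining implication that $a\in A$ together with $a\le y$ forces $y\in A$. Hence $\cT_\le$ is a topology. For the identity $x^\ge=\opn_{\cT_\le}x$, I would observe that $x^\ge$ is an upper set by transitivity and contains $x$ by reflexivity, so it is open; conversely any open, hence upper, set containing $x$ must contain every $y$ with $x\le y$, so it contains $x^\ge$. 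Thus $x^\ge$ is the smallest open set containing $x$.

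For (ii), reflexivity of $\le_\cT$ (equivalently of its opposite $\ge_\cT$) follows from $x\in\cl x$, and transitivity follows from monotonicity and idempotence of closure: if $x\in\cl y$ and $y\in\cl z$ then $\cl y\subset\cl z$, so $x\in\cl z$. The conceptual core is the duality $x\in\cl y$ if and only if $y\in\opn_\cT x$: indeed $x$ lies in $\cl y$ exactly when every open set containing $x$ meets $\{y\}$, that is, contains $y$, which is precisely membership of $y$ in the intersection $\opn_\cT x$ of all open sets containing $x$. This duality gives $x^{\ge_\cT}=\setof{y\mid x\in\cl y}=\setof{y\mid y\in\opn_\cT x}=\opn_\cT x$.

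Part (iii) should then follow formally. To obtain $\le_{\cT_\le}=\le$, I would compute the closure operator of $\cT_\le$: since the closed sets are precisely the lower sets, $\cl_{\cT_\le}y=y^\le$, whence $x\le_{\cT_\le}y$ if and only if $x\le y$. To obtain $\cT_{\le_\cT}=\cT$, I would argue both inclusions. Every $U\in\cT$ is upper with respect to $\le_\cT$, because if $a\in U$ and $a\in\cl b$ then $b$ cannot belong to the closed set $X\setminus U$, so $b\in U$. Conversely, any $\le_\cT$-upper set $A$ satisfies $A=\bigcup_{x\in A}x^{\ge_\cT}=\bigcup_{x\in A}\opn_\cT x$ by (ii), a union of open sets, so $A\in\cT$. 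Finally, antisymmetry of $\le$ is equivalent, through $\cl_{\cT_\le}x=x^\le$, to injectivity of the map $x\mapsto\cl_{\cT_\le}x$, one of the standard characterisations of the $T_0$ axiom; this matches partial orders with $T_0$ topologies.

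The step I expect to be the main obstacle is the inclusion $\cT_{\le_\cT}\subset\cT$, namely showing that every order-theoretic upper set is genuinely open. This is exactly where finiteness enters: it guarantees that each minimal neighbourhood $\opn_\cT x$ is open, as recorded in the preliminaries, and so lets me realise an arbitrary upper set as a union of such minimal open sets. The remaining verifications are purely formal manipulations of the definitions.
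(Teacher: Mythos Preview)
Your argument is correct and is the standard proof of the Alexandroff correspondence. Note, however, that the paper does not actually prove this theorem: it is stated with attribution to Alexandroff~\cite{Al1937} and used as background, so there is no proof in the paper to compare against. Your write-up would serve perfectly well as a self-contained justification; the only place finiteness is genuinely needed is exactly where you flag it, in ensuring that $\opn_\cT x$ is open so that an arbitrary $\le_\cT$-upper set decomposes as a union of open sets.
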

\begin{thm}
\cite[Prop. 1.2.1]{Ba2011}
Assume $(X,\cT), (Y,\cS)$ are finite topological spaces. The map $f:(X,\cT) \rightarrow (Y,\cS)$ is continuous if and only if the map $f:\left(X,\le_\cT\right) \rightarrow (Y,\le_\cS)$ is order preserving.
\end{thm}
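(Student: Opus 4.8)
The plan is to run both implications through the observation, valid in any finite space, that each point $x$ has a smallest open neighborhood, namely $\opn_\cT x$, and that by Theorem~\ref{thm:alexandroff}(ii) this set coincides with the upper set $x^{\ge_\cT}$. The whole statement then collapses to the single equivalence
\[
  f\text{ is continuous}\iff \forall\,x\in X:\ f(\opn_\cT x)\subseteq \opn_\cS f(x),
\]
after which one only has to rewrite the right-hand inclusion in terms of the specialization preorders.

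To obtain this intermediate equivalence I would argue as follows. If $f$ is continuous, then for each $x$ the set $f^{-1}(\opn_\cS f(x))$ is open and contains $x$, so by minimality of $\opn_\cT x$ we get $\opn_\cT x\subseteq f^{-1}(\opn_\cS f(x))$, which is exactly the claimed inclusion. Conversely, assume the inclusion holds for every $x$, and let $V\in\cS$. For each $x\in f^{-1}(V)$ the set $V$ is an open neighborhood of $f(x)$, hence $\opn_\cS f(x)\subseteq V$, and therefore $\opn_\cT x\subseteq f^{-1}(\opn_\cS f(x))\subseteq f^{-1}(V)$. Since moreover $x\in\opn_\cT x$, we conclude $f^{-1}(V)=\bigcup_{x\in f^{-1}(V)}\opn_\cT x$, a union of open sets, so $f^{-1}(V)$ is open and $f$ is continuous.

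It remains to recognize the inclusion $f(\opn_\cT x)\subseteq \opn_\cS f(x)$ as order preservation. Substituting $\opn_\cT x=x^{\ge_\cT}$ and $\opn_\cS f(x)=f(x)^{\ge_\cS}$ from Theorem~\ref{thm:alexandroff}(ii), the inclusion says precisely that whenever $x\le_\cT z$ one has $f(x)\le_\cS f(z)$. Quantified over all $x$, this is exactly the assertion that $f:(X,\le_\cT)\to(Y,\le_\cS)$ is order preserving, which closes the chain of equivalences.

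The only place where finiteness is essential — and hence the crux of the argument — is the existence of the smallest open neighborhood $\opn_\cT x$; for a general topological space it need not exist, and the equivalence breaks down. Everything else is a formal manipulation of upper sets, so I anticipate no real difficulty beyond keeping the orientation of the specialization preorder $\le_\cT$ (for which $x\le_\cT y$ means $x\in\cl_\cT y$) consistent between source and target. An equally short alternative avoids minimal neighborhoods altogether: by Theorem~\ref{thm:alexandroff}(i),(iii) the open sets of a finite space are exactly the upper sets of its specialization preorder, so continuity becomes the demand that preimages of upper sets be upper sets, which is the standard reformulation of order preservation for maps of preordered sets.
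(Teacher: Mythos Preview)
Your argument is correct. The paper, however, does not supply its own proof of this theorem: it simply quotes the result with a citation to \cite[Prop.~1.2.1]{Ba2011} and moves on. So there is nothing to compare against on the paper's side; your proof via minimal open neighborhoods (and the alternative via ``open sets $=$ upper sets'') is the standard route and would serve perfectly well as the omitted justification.
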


\begin{thm}
\cite[Rem. 1.2.8]{Ba2011}
\label{thm:contractible_closure}
Let $(X, \cT)$ be a finite $T_0$ topological space. Then $\cl x$ is contractible for any $x \in X$.
\end{thm}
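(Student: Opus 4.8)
The plan is to argue by induction on the number of cells of $X$, comparing the two homology theories through the long exact sequences of a suitable pair and closing the argument with the five lemma. Throughout I would work with reduced (augmented) complexes — this is precisely what augmentability buys — so that the bottom of every exact sequence behaves uniformly and the empty set carries the reduced homology of $S^{-1}$. Observe first that the hypothesis already pins down a common value on the basic pieces: for every $x\in X$ the Lefschetz homology of $\cl x$ is that of a point by assumption, while its singular homology is that of a point because $\cl x = x^{\le_\kappa}$ is contractible by Theorem \ref{thm:contractible_closure}. For the inductive step I would pick a cell $x_0$ that is maximal for $\le_\kappa$ and set $X' := X\setminus\{x_0\}$. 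Since $x_0$ is maximal, $X'$ is a lower set, hence simultaneously a closed subspace and a Lefschetz subcomplex, and the closures in $X'$ of its points agree with their closures in $X$; thus $X'$ (and likewise every lower set below) again satisfies the hypotheses, so the inductive assumption $H^{Lef}(X')\cong H^{sing}(X')$ is available.

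On the Lefschetz side the relative computation is immediate. The quotient complex $C(X)/C(X')$ is free on the single generator $x_0$, concentrated in degree $d := \dim x_0$ with vanishing differential, since $\bdop x_0$ lies in $C(X')$ and $x_0$ is a facet of nothing. Hence $H^{Lef}_n(X,X') = R$ for $n=d$ and $0$ otherwise. Feeding this into the long exact sequence of the pair $(\cl x_0,\mo x_0)$, where $\mo x_0 := \cl x_0\setminus\{x_0\}$, and using that $\cl x_0$ is Lefschetz-acyclic by hypothesis, I obtain that $\mo x_0$ has the reduced Lefschetz homology of the sphere $S^{d-1}$.

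For the singular side I would pass to order complexes, which is where Theorem \ref{thm:excision_simplicial} enters. Writing $\Delta(\,\cdot\,)$ for the order complex and using the standard identification of the singular homology of a finite $T_0$-space with the simplicial homology of its order complex, I decompose $\Delta(X) = \Delta(X')\cup\Delta(\cl x_0)$, where $\Delta(\cl x_0)$ is the cone $x_0 * \Delta(\mo x_0)$ (every chain through the maximal element $x_0$ has $x_0$ on top) and $\Delta(X')\cap\Delta(\cl x_0) = \Delta(\mo x_0)$. Simplicial excision then gives $H(|\Delta(\cl x_0)|,|\Delta(\mo x_0)|) \cong H(|\Delta(X)|,|\Delta(X')|)$, and since the cone $|\Delta(\cl x_0)|$ is contractible the left-hand side is the shifted reduced homology of $\Delta(\mo x_0)$, that is, of $\mo x_0$. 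By the inductive hypothesis applied to the smaller complex $\mo x_0$ this equals its Lefschetz homology, which we just identified with that of $S^{d-1}$. Hence $H^{sing}_n(X,X') = R$ for $n=d$ and $0$ otherwise, matching the Lefschetz relative groups.

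Finally I would assemble the two long exact sequences of the pair $(X,X')$ into a commuting ladder and invoke the five lemma: the terms over $X'$ agree by induction, the relative terms agree by the two computations above, and the one-cell base case is the hypothesis itself. The main obstacle is exactly the word ``commuting'': the five lemma demands a natural comparison homomorphism $H^{Lef}\to H^{sing}$ compatible with the connecting maps of pairs, and this natural transformation must be produced before the diagram chase is legitimate. I expect to build it through the order complex, carrying each cell $x$ to the acyclic subcomplex $\Delta(\cl x)$ and using the hypothesis that these closures are acyclic to see that the resulting carrier is acyclic, so that the induced chain map $C(X)\to C(\Delta(X))$ is defined and, by the relative computations, a quasi-isomorphism. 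Checking that this comparison is well defined, natural in lower sets, and compatible with the boundary maps of the exact sequences is the technical heart of the proof; the excision and contractibility inputs then supply the rest, with augmentability keeping the reduced homologies and the degree-$0$ cells (where $\mo x_0 = \emptyset$ and $S^{d-1} = S^{-1}$) in line.
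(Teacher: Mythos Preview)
Your proposal is not a proof of the stated theorem at all: Theorem~\ref{thm:contractible_closure} asserts that in a finite $T_0$ space the closure $\cl x$ of any point is contractible, whereas what you have sketched is an argument for the paper's main result (Theorem~\ref{thm:main_thm}) comparing Lefschetz and singular homology. Indeed, in your very first paragraph you \emph{invoke} Theorem~\ref{thm:contractible_closure} as an ingredient (``its singular homology is that of a point because $\cl x$ is contractible by Theorem~\ref{thm:contractible_closure}''), which would be circular if this were the theorem you are proving.

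The paper does not prove Theorem~\ref{thm:contractible_closure}; it is quoted from \cite[Rem.~1.2.8]{Ba2011}. The standard argument is short and has nothing to do with Lefschetz complexes, excision, or the five lemma: since $\cl x = x^{\le_\cT}$ has $x$ as its unique maximum, the map $H:\cl x\times[0,1]\to\cl x$ given by $H(y,t)=y$ for $t<1$ and $H(y,1)=x$ is continuous (every open set containing $x$ is all of $\cl x$), so the identity is homotopic to the constant map at $x$. If you intended to prove the main theorem instead, please resubmit against that statement; your outline there is broadly in line with the paper's approach, but it is simply attached to the wrong theorem here.
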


Now assume $(X, \cT)$ is a finite $T_0$ space and denote by $\cK(X)$ the abstract simplicial complex of subsets of $X$ linearly ordered by $\le_\cT$ and its geometric realization by $\abs{\cK(X)}$. The \emph{McCord map} is the map $\mu:\abs{\mathcal{K}(X)} \rightarrow X$ defined for a point $p = t_1 x_1 + ... + t_k x_k \in \abs{\cK(X)}$ by $\mu(p) := x_1$ where $x_1 \le_{\cT} x_2 \le_{\cT} ... \le_{\cT} x_k$.
\begin{thm}
\emph{(McCord, }\cite{McCord}\emph{)}
For every finite $T_0$ topological space $X$ the McCord map $\mu$ is a weak homotopy equivalence.
\end{thm}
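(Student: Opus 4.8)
The plan is to deduce the statement from McCord's general criterion for recognizing weak homotopy equivalences by means of basis-like open covers: if $f\colon E\to B$ is continuous and $\mathcal U$ is an open cover of $B$ that is a basis for its topology, and if each restriction $f\colon f^{-1}(U)\to U$ with $U\in\mathcal U$ is a weak homotopy equivalence, then $f$ itself is a weak homotopy equivalence (\cite{McCord}). I would apply this with $f=\mu$, $B=X$, and $\mathcal U=\setof{\opn x\mid x\in X}$ the family of minimal open sets furnished by Theorem~\ref{thm:alexandroff}. Thus the whole proof splits into checking the hypotheses of this criterion.

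First I would record that $\mathcal U$ is basis-like. Each $\opn x$ is open by Theorem~\ref{thm:alexandroff}, and since an intersection $\opn x\cap\opn y$ is again open it equals the union of the minimal open sets $\opn z$ taken over its own points; hence $\mathcal U$ is a basis for $\cT$. Next I would verify that $\mu$ is continuous. The key geometric fact is that in $\abs{\cK(X)}$ the carrier of a point (its set of vertices with positive barycentric coordinate) can only grow as one passes to nearby points, so $\mu$, which selects an extremal vertex of the carrier with respect to $\le_\cT$, is monotone on a neighborhood of each point; a short computation then shows that $\mu^{-1}(\opn x)$ is open for every $x\in X$.

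The crux is to prove that each restriction $\mu\colon\mu^{-1}(\opn x)\to\opn x$ is a weak homotopy equivalence, and I would do this by showing that both spaces are contractible, so that the induced map on each homotopy group is automatically an isomorphism of trivial groups. On the target side, $\opn x=x^{\ge_\cT}$ has $x$ as its least element, hence it is a finite space contractible by the argument of Theorem~\ref{thm:contractible_closure} applied to the order dual (a greatest element is replaced by a least one). On the source side I would exhibit a strong deformation retraction of $\mu^{-1}(\opn x)$ onto the full subcomplex $\abs{\cK(\opn x)}$ spanned by the vertices lying in $\opn x$: every point of $\mu^{-1}(\opn x)$ has at least one carrier vertex in $\opn x$, and transferring linearly the barycentric weight carried by vertices outside $\opn x$ onto those inside keeps the point inside $\mu^{-1}(\opn x)$ throughout and lands it in $\abs{\cK(\opn x)}$ at the end. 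Because $\opn x$ has the least element $x$, its order complex is a simplicial cone with apex $x$, so $\abs{\cK(\opn x)}$, and therefore $\mu^{-1}(\opn x)$, is contractible.

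Feeding these local weak equivalences into McCord's criterion yields the global conclusion that $\mu$ is a weak homotopy equivalence. I expect the main obstacle to be twofold. The routine-looking but delicate part is checking that the weight-transfer homotopy in the source is genuinely well defined and continuous across all the cells of $\abs{\cK(X)}$ and never leaves $\mu^{-1}(\opn x)$. The genuinely nontrivial topological input, however, is the basis-like cover criterion itself: I would either invoke it directly from \cite{McCord} or, if a self-contained treatment is wanted, reprove it by patching together the local homotopies over the nerve of $\mathcal U$, which is where the real difficulty resides.
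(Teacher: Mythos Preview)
The paper does not supply a proof of this theorem; it is quoted as a classical result with a reference to \cite{McCord}. Your outline is exactly McCord's original argument: invoke his basis-like open cover criterion with $\mathcal U=\{\opn x\mid x\in X\}$ and verify the local hypothesis by showing that each $\opn x$ and each $\mu^{-1}(\opn x)$ is contractible. So there is no alternative approach in the paper to compare against; you are reproducing the standard proof.

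One caution is worth flagging, since it bears on the step you yourself single out as delicate. With the conventions adopted in this paper (open sets are upper sets, so $\opn x=x^{\ge}$), the McCord map must send a point to the \emph{maximal} vertex of its carrier in order to be continuous; the formula $\mu(p)=x_1$ with $x_1\le_\cT\cdots\le_\cT x_k$ written just before the theorem appears to be a slip. Your description of $\mu^{-1}(\opn x)$ as the set of points having at least one carrier vertex in $\opn x$, together with the barycentric weight-transfer retraction onto $\abs{\cK(\opn x)}$, is correct for the maximal-vertex map, where $\mu^{-1}(\opn x)$ is the open star of the vertex set $\opn x$. If instead one literally uses the minimal-vertex formula, then $\mu^{-1}(\opn x)=\abs{\cK(\opn x)}$ on the nose and no retraction is needed, but $\mu$ is then not continuous and the argument collapses at the first step. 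So when you write this up, be explicit about which extremal vertex $\mu$ selects and check continuity against the chosen convention.
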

\begin{cor}
\label{cor:mccord}
For every finite $T_0$ space $X$ the McCord map $\mu$ induces an isomorphism $\mu_* : H(|\cK(X)|) \rightarrow H(X)$.
\end{cor}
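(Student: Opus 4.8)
The plan is to derive the corollary from McCord's theorem together with the classical principle that every weak homotopy equivalence induces isomorphisms on all singular homology groups. McCord's theorem tells us that $\mu : |\cK(X)| \to X$ is a weak homotopy equivalence, so the entire task reduces to recalling why weak homotopy equivalences are homology isomorphisms and applying this to $\mu$.

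First I would convert $\mu$ into an inclusion by passing to its mapping cylinder $M_\mu$. The domain includes as $i : |\cK(X)| \hookrightarrow M_\mu$, while $M_\mu$ deformation retracts onto $X$ along a retraction $r$ satisfying $r \circ i = \mu$. Since $r$ is a homotopy equivalence and $\mu$ a weak homotopy equivalence, the two-out-of-three property forces $i$ to be a weak homotopy equivalence as well. Feeding this into the long exact sequence of homotopy groups of the pair $\left(M_\mu, |\cK(X)|\right)$ yields $\pi_n\!\left(M_\mu, |\cK(X)|\right) = 0$ for every $n$ and every basepoint.

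The decisive---and hardest---step is the transition from vanishing relative homotopy groups to vanishing relative singular homology groups $H_n\!\left(M_\mu, |\cK(X)|\right)$. This is exactly the content of the relative Hurewicz theorem, and the subtlety lies entirely in the non-simply-connected case: one cannot invoke Hurewicz in its naive form but must account for the action of the fundamental group, the sharp statement being that a pair all of whose relative homotopy groups vanish at every basepoint is homologically trivial. Once this is granted, the long exact homology sequence of the pair immediately makes $i_* : H_n(|\cK(X)|) \to H_n(M_\mu)$ an isomorphism in every degree.

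Finally I would assemble the pieces. Since $r$ is a homotopy equivalence it induces isomorphisms $H_n(M_\mu) \cong H_n(X)$, and the factorization $\mu_* = r_* \circ i_*$ then exhibits $\mu_* : H_n(|\cK(X)|) \to H_n(X)$ as a composite of isomorphisms. Hence $\mu_*$ is an isomorphism in all degrees, which is precisely the assertion of the corollary. In practice, rather than reproving the relative Hurewicz input, I would simply cite the standard fact that weak homotopy equivalences induce singular homology isomorphisms and invoke McCord's theorem.
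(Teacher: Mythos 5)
Your proposal is correct and follows the same route as the paper, which states this corollary without further argument as an immediate consequence of McCord's theorem combined with the standard fact that weak homotopy equivalences induce isomorphisms on singular homology. Your mapping-cylinder/relative-Hurewicz sketch is just an expanded justification of that standard fact, and your closing remark that one would in practice simply cite it matches what the paper does.
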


Using elementary results from homological algebra we obtain the following relative version of McCord Theorem.

\begin{cor}
\label{cor:rel_mccord}
Let $X$ be a finite $T_0$ space and let $A \subset X$ be its subspace. Then the McCord map $\mu$ induces an isomorphism from $H(X, A)$ to $H(|\mathcal{K}(X)|, |\mathcal{K}(A)|)$.
\end{cor}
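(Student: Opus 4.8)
The plan is to deduce the relative statement from the absolute one (Corollary \ref{cor:mccord}) by comparing the long exact homology sequences of the pairs $(X,A)$ and $(\abs{\cK(X)},\abs{\cK(A)})$ and invoking the Five Lemma. Since the two rows will be identified through the map of pairs induced by $\mu$, the whole argument is formal once the naturality is set up.

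First I would check that $\mu$ really is a map of pairs. As $A$ is a subspace of the finite $T_0$ space $X$, it is itself a finite $T_0$ space, and the order $\le_{\cT}$ it inherits is the restriction of the order on $X$: for $x,y\in A$ one has $x\in\cl_{\cT}y\cap A$ iff $x\le_{\cT}y$. Hence $\cK(A)$, the complex of $\le_{\cT}$-chains contained in $A$, is exactly the full subcomplex of $\cK(X)$ spanned by the vertices lying in $A$, so $\abs{\cK(A)}$ is a subspace of $\abs{\cK(X)}$. Moreover, for a point $p=t_1x_1+\dots+t_kx_k\in\abs{\cK(A)}$ with $x_1\le_{\cT}\dots\le_{\cT}x_k$ all in $A$ we get $\mu(p)=x_1\in A$, so $\mu$ restricts to the McCord map $\mu_A:\abs{\cK(A)}\to A$ of $A$ and defines $\mu:(\abs{\cK(X)},\abs{\cK(A)})\to(X,A)$.

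Next I would write the commutative ladder produced by the naturality of the long exact sequence of a pair, with rows the long exact sequences of $(\abs{\cK(X)},\abs{\cK(A)})$ and of $(X,A)$ and vertical arrows induced by $\mu$. In this ladder the vertical maps on the absolute terms $H_n(\abs{\cK(X)})\to H_n(X)$ and $H_n(\abs{\cK(A)})\to H_n(A)$ are isomorphisms: the first by Corollary \ref{cor:mccord} applied to $X$, and the second by the same corollary applied to the finite $T_0$ space $A$, using the identification $\mu|_{\abs{\cK(A)}}=\mu_A$ from the previous step. Applying the Five Lemma to each window of the ladder then forces the middle vertical map $\mu_*:H_n(\abs{\cK(X)},\abs{\cK(A)})\to H_n(X,A)$ to be an isomorphism for every $n$, and its inverse is the isomorphism asserted in the statement.

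I do not expect a serious obstacle here; the one point that genuinely requires care is the identification in the first step that $\cK(A)$ coincides with the subcomplex of $\cK(X)$ supported on $A$, since this is what makes $\mu$ a map of pairs restricting to the McCord map of $A$ and thereby makes the comparison ladder, hence the Five Lemma, available. The remainder is the standard bookkeeping of the five-lemma argument.
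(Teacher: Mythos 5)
Your argument is correct and is exactly the one the paper has in mind: the paper gives no explicit proof, merely noting that the corollary follows from ``elementary results from homological algebra,'' which is precisely the naturality-of-the-long-exact-sequence plus Five Lemma argument you carry out, together with the (correctly verified) identification of $\cK(A)$ as the full subcomplex of $\cK(X)$ on the vertices of $A$.
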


\subsection{Lefschetz complexes.}
\label{sec:lefschetz}

Let $R$ denote a ring with unity. We say that $(X,\kappa)$ is a {\em Lefschetz complex} (see \cite[Chpt. III, Sec. 1, Def. 1.1]{Le1942} and compare also with definition of an \emph{S-complex} in \cite[Section 2]{MB}).
If $X=(X_q)_{q\in{\scriptsize \mathbb{Z}^+}}$ is a finite set with gradation,
$\kappa : X \times X \to R$ is a map such that $\kappa(x,y)\neq 0 $
implies $x\in X_q,\,y\in X_{q-1}$ and for any $x,z\in X$ we have
\begin{equation}
\label{eq:kappa-condition}
    \sum_{y\in X}\kappa(x,y)\kappa(y,z)=0.
\end{equation}
We refer to the elements of $X$ as {\em cells}
and to $\kappa(x,y)$ as the {\em incidence coefficient} of $x,y$. If $x \in X_q$, we say that $x$ has \emph{dimension} $q$.
\par
Let $(X,\kappa)$ be a fixed Lefschetz complex. Denote by $C^\kappa(X)$ the graded free module over $R$ spanned by $X$. We refer to the elements of $C^\kappa(X)$ as \emph{Lefschetz chains}. Define $\partial^\kappa:C^\kappa(X) \rightarrow C^\kappa(X)$ on generators $x \in X$ by $\partial^\kappa(x) := \sum_{y \in X} \kappa(x, y) y$. By (\ref{eq:kappa-condition}) we easily see that $(C^\kappa(X), \partial^\kappa)$ is a chain complex. We denote its $n$-th homology group by $H_n^\kappa(X)$ and call it $n$\emph{-th Lefschetz homology group} of $X$.
\par
Note that the concept of Lefschetz complex is complementary to that of chain complex: it shifts focus to the basis of the complex. Indeed, if $(C, \partial)$ is a finitely generated cell complex and $(e_i)_{i=1}^n$ is its fixed basis, then by taking $X = \{ e_i \mid i = 1,\ldots,n \}$ and setting $\kappa(e_i, e_j)$ as the coefficient of $e_j$ in the decomposition of $\partial e_i$ on the basis, we obtain Lefschetz complex $X$ such that $(C^\kappa(X), \partial^\kappa) = (C, \partial)$.
\par
If for $x, y \in X$ we have $\kappa(x, y) \neq 0$, we say that $y$ is a \emph{facet} of $x$ and we write $y \prec x$. We denote by $\leq$ the transitive and reflexive closure of $\prec$, that is the smallest with respect to inclusion relation that contains $\prec$ and is both transitive and reflexive. If $y \leq x$ we say that $y$ is a \emph{face} of $x$.
\par
By Theorem \ref{thm:alexandroff} we have the $T_0$-topology $\cT_\leq$ associated with $\left(X, \leq\right)$. We call it the \emph{Lefschetz topology} of $X$. By $C(X)$ we denote the chain complex of singular chains on $(X, \cT_\leq)$. The chain complexes of singular chains $C(X)$ should not be mistaken with the chain complex of Lefschetz chains, denoted by $C^\kappa(X)$.
\par
Now, we recall a few definitions and theorems which will be used in the sequel.
\par
Given $A \subset X$ we define the \emph{mouth} of $A$ as the set $\mo A := \cl A \setminus A$. We say that $A$ is \emph{locally closed} if $\mo A$ is closed. 
Note that in \cite{MM} a locally closed set is called proper. 
Observe that, in particular, closed and open sets are locally closed.

\begin{thm} \label{cor:lef_subcomplex}
\cite[Theorem 3.1]{MB}
Let $(X, \kappa)$ be a Lefschetz complex and let $A \subset X$ be locally closed in the Lefschetz topology of $X$. Then $(A, \kappa \restriction_{A \times A})$ is a Lefschetz complex.
\end{thm}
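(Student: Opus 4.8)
The plan is to verify directly the three defining conditions of a Lefschetz complex for the pair $(A, \kappa\restriction_{A\times A})$. The gradation and the dimension condition are immediate: I would set $A_q := A \cap X_q$, so that $A$ becomes a finite graded set, and observe that whenever $x, y \in A$ satisfy $\kappa(x,y) \neq 0$ we already have $x \in X_q$ and $y \in X_{q-1}$, hence $x \in A_q$ and $y \in A_{q-1}$. Thus the only substantial point is condition~(\ref{eq:kappa-condition}) for the restricted incidence coefficient.

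The key idea is to translate the topological hypothesis of local closedness into a purely order-theoretic statement. Recall that in the Lefschetz topology the open sets are the upper sets, so the closed sets are exactly the lower sets; consequently $\cl A = A^\le$ and $\mo A = A^\le \setminus A$. I would first prove the auxiliary claim that local closedness makes $A$ \emph{order-convex}: if $x, z \in A$ and $z \le y \le x$ for some $y \in X$, then $y \in A$. The argument is a short contradiction. If $y \notin A$, then $y \le x \in A$ gives $y \in A^\le \setminus A = \mo A$, which by hypothesis is closed, hence a lower set; since $z \le y$, this forces $z \in \mo A = \cl A \setminus A$, contradicting $z \in A$.

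With order-convexity in hand, the verification of~(\ref{eq:kappa-condition}) is routine. For fixed $x, z \in A$ I would write the restricted sum $\sum_{y \in A}\kappa(x,y)\kappa(y,z)$ as the full sum $\sum_{y \in X}\kappa(x,y)\kappa(y,z)$ minus the sum over $y \in X \setminus A$. The full sum vanishes because $(X,\kappa)$ is a Lefschetz complex, so it suffices to show each term with $y \in X \setminus A$ is zero. A nonzero term would require $\kappa(x,y) \neq 0$ and $\kappa(y,z) \neq 0$, that is $z \prec y \prec x$ and hence $z \le y \le x$; order-convexity then forces $y \in A$, a contradiction. Thus the extra terms drop out and the restricted sum equals zero, which completes the verification.

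The main obstacle is recognizing that local closedness is precisely the right hypothesis and packaging it as order-convexity; once that bridge between the topological and combinatorial pictures is made explicit, the remainder is bookkeeping. I would also double-check the orientation conventions, namely that $y \prec x$ indeed yields $y \le x$, so that a nonzero product $\kappa(x,y)\kappa(y,z)$ produces the chain $z \le y \le x$ rather than its reverse, since an inverted convention there is the only place where the argument could silently break.
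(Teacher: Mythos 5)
Your proof is correct. The paper does not prove this statement itself but quotes it from \cite{MB}, and your argument --- translating local closedness into order-convexity of $A$ and then observing that every term with $y \notin A$ vanishes in the sum defining condition (\ref{eq:kappa-condition}) --- is essentially the proof given in that reference.
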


\begin{thm} \label{thm:long_exact_lef}
\cite[Theorem 3.4]{MB}
Assume $X$ is a Lefschetz complex and $X' \subset X$ is closed. Let $i: C^\kappa(X') \rightarrow C^\kappa(X)$ denote inclusion and $j: C^\kappa(X) \rightarrow C^\kappa(X \setminus X')$ be the projection defined by 
$$
j(c) =
\begin{cases}
0, & c \in C^\kappa(X'), \\
c, & \textnormal{otherwise}.
\end{cases}
$$
Then the following sequence

\[
\begin{diagram}
\node{0} \arrow{e}{} \node{C^\kappa(X')} \arrow{e,t}{i} \node{C^\kappa(X)} \arrow{e,t}{j} \node{C^\kappa(X \setminus X')} \arrow{e}{} \node{0}
\end{diagram}
\]
is exact and it extends to a long exact sequence.
\end{thm}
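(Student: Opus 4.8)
The plan is to recognize the displayed sequence as a short exact sequence of chain complexes and then to obtain the long exact sequence by the standard zig-zag (snake) lemma of homological algebra. The structural heart of the matter, which I would establish first, is that closedness of $X'$ is exactly what makes $C^\kappa(X')$ a chain subcomplex of $C^\kappa(X)$. Under the Alexandroff correspondence of Theorem~\ref{thm:alexandroff} the open sets of the Lefschetz topology are the upper sets, so the closed set $X'$ is a lower set for $\le$. Hence whenever $x\in X'$ and $\kappa(x,y)\neq 0$ we have $y\prec x$, so $y\le x$ and therefore $y\in X'$. Consequently $\partial^\kappa(x)=\sum_{y\in X'}\kappa(x,y)\,y$ lies in $C^\kappa(X')$ for every generator $x\in X'$, so $C^\kappa(X')$ is carried into itself by $\partial^\kappa$. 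Since $X'$ is closed it is locally closed, so by Theorem~\ref{cor:lef_subcomplex} the pair $(X',\kappa\restriction_{X'\times X'})$ is a Lefschetz complex, and the computation just made shows that its intrinsic boundary agrees with the restriction of $\partial^\kappa$; thus $i$ is a chain map. The complement $X\setminus X'$ is open, hence also locally closed, so $(X\setminus X',\kappa\restriction_{(X\setminus X')\times(X\setminus X')})$ is likewise a Lefschetz complex by Theorem~\ref{cor:lef_subcomplex}.

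Next I would verify module-level exactness and that $j$ is a chain map. As graded $R$-modules $C^\kappa(X)=C^\kappa(X')\oplus C^\kappa(X\setminus X')$, because $X=X'\sqcup(X\setminus X')$ partitions the distinguished basis; under this splitting $i$ is the inclusion of the first summand and $j$ is the projection onto the second. Injectivity of $i$ and surjectivity of $j$ are then immediate, and $\ker j$ is precisely the span of $X'$, which is $\im i$; so the sequence is exact at each of its three modules. That $j$ commutes with boundaries follows from the same lower-set observation read from the other side: for a generator $x\in X\setminus X'$ one computes $j(\partial^\kappa x)=\sum_{y\in X\setminus X'}\kappa(x,y)\,y=\partial^\kappa(jx)$, while for $x\in X'$ both $j(\partial^\kappa x)$ and $\partial^\kappa(jx)$ vanish, the former because all facets of $x$ remain in $X'$ by the previous paragraph. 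Hence $j$ is a chain map, and we have a genuine short exact sequence of chain complexes.

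Finally, with the short exact sequence of chain complexes in hand, the long exact sequence is produced by the classical zig-zag lemma: one defines the connecting homomorphism by the usual diagram chase (lift a cycle of $C^\kappa(X\setminus X')$ through the surjection $j$, apply $\partial^\kappa$, and pull the result back through the injection $i$), and checks exactness at each spot of
\[
\cdots \to H_n^\kappa(X') \xrightarrow{i_*} H_n^\kappa(X) \xrightarrow{j_*} H_n^\kappa(X\setminus X') \xrightarrow{\partial_*} H_{n-1}^\kappa(X') \to \cdots.
\]
I do not expect any genuine difficulty in this last step, as it is entirely formal and identical to the derivation of the long exact homology sequence of a pair. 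The main obstacle, and the only real content of the theorem, is the first step: pinning down that closedness of $X'$ (its being a lower set) is precisely the hypothesis that turns $C^\kappa(X')$ into a subcomplex and simultaneously identifies the quotient $C^\kappa(X)/C^\kappa(X')$ with $C^\kappa(X\setminus X')$ via $j$. This is where I would concentrate the argument, since without closedness a facet of a cell in $X'$ could escape $X'$ and $i$ would fail to be a chain map.
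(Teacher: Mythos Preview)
The paper does not prove this theorem; it is quoted without proof from \cite[Theorem~3.4]{MB} as a preliminary result. Your argument is correct and is the standard one: closedness of $X'$ in the Lefschetz topology means $X'$ is a lower set, which forces $\partial^\kappa$ to carry $C^\kappa(X')$ into itself, so $i$ is a chain map; the basis splitting $X=X'\sqcup(X\setminus X')$ gives the module-level exactness and identifies the quotient with $C^\kappa(X\setminus X')$ via $j$; and the long exact sequence then follows from the zig-zag lemma. There is nothing to compare against in this paper, and your write-up would serve perfectly well as a self-contained proof of the cited result.
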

\begin{thm}
\label{thm:lefschetz_excision}
\cite[Theorem 3.5]{MB}
Let $(X, \kappa)$ be a Lefschetz complex and $X' \subset X$ be closed in the Lefschetz topology of $X$. Then the homology of $H^\kappa(X, X')$ the quotient chain complex $(C^\kappa(X) / C^\kappa(X'), \partial^\kappa)$ is isomorphic to $H^\kappa(X \setminus X')$.
\end{thm}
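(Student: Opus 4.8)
The plan is to prove the isomorphism by induction on the cardinality of $X$, comparing the Lefschetz long exact sequence of a closed pair with the corresponding relative singular sequence through one natural transformation, feeding the hypothesis in by an inner induction on the mouths of cells. Since $\abs{X}=1$ forces $X=\cl x$ for the unique cell, the base case is exactly the assumption $H^\kappa(\cl x)\cong H(\mathrm{pt})$ together with the contractibility of $\cl x$ from Theorem~\ref{thm:contractible_closure}. For the inductive step I would choose a cell $x_0\in X$ maximal with respect to $\le_{\cT}$; then $\{x_0\}$ is open, so $X':=X\setminus\{x_0\}$ is closed, and for every $y\in X'$ one has $\cl_{X} y\subset X'$, so both the closure hypothesis and (after checking it is inherited by closed subcomplexes) augmentability descend to $X'$. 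By Theorem~\ref{cor:lef_subcomplex} the sets $X'$, $\cl x_0$ and $\mo x_0$ are again Lefschetz complexes, and since $\mo x_0\subset X'\subsetneq X$ in any case, the inductive hypothesis will apply to both $X'$ and $\mo x_0$.

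Next I would reduce both relative groups of the pair $(X,X')$ to the local pair $(\cl x_0,\mo x_0)$. On the Lefschetz side Theorem~\ref{thm:lefschetz_excision} gives $H^\kappa(X,X')\cong H^\kappa(X\setminus X')=H^\kappa(\{x_0\})$, which is $R$ in degree $\dim x_0$ and $0$ otherwise, and the same theorem applied inside $\cl x_0$ (where $\mo x_0$ is closed and $\cl x_0\setminus\mo x_0=\{x_0\}$) gives $H^\kappa(\cl x_0,\mo x_0)\cong H^\kappa(\{x_0\})$. On the singular side I would first invoke Corollary~\ref{cor:rel_mccord} to replace $H(X,X')$ by $H(\abs{\cK(X)},\abs{\cK(X')})$, then apply the simplicial excision Theorem~\ref{thm:excision_simplicial} with $\cK=\cK(X)$, $\cK_1=\cK(X')$ and $\cK_2=\cK(\cl x_0)$: since $x_0$ is the top of every chain containing it, one checks $\cK=\cK_1\cup\cK_2$ and $\cK_1\cap\cK_2=\cK(\mo x_0)$, whence $H(\abs{\cK(X)},\abs{\cK(X')})\cong H(\abs{\cK(\cl x_0)},\abs{\cK(\mo x_0)})$, and a second application of Corollary~\ref{cor:rel_mccord} identifies this with $H(\cl x_0,\mo x_0)$. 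Thus it suffices to compare the two theories on the pair $(\cl x_0,\mo x_0)$.

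This is where the hypothesis enters. Feeding $H^\kappa(\cl x_0)\cong H(\mathrm{pt})$ and $H^\kappa(\cl x_0,\mo x_0)\cong R$ (in degree $\dim x_0$) into the Lefschetz long exact sequence of $(\cl x_0,\mo x_0)$ from Theorem~\ref{thm:long_exact_lef} pins down $H^\kappa(\mo x_0)$ as the reduced homology of a $(\dim x_0-1)$-sphere. Because $\mo x_0$ is a closed subcomplex with strictly fewer cells, the inductive hypothesis yields $H^\kappa(\mo x_0)\cong H(\mo x_0)$, so $\mo x_0$ is singularly a homology $(\dim x_0-1)$-sphere as well. Since $\cl x_0$ is contractible by Theorem~\ref{thm:contractible_closure}, the singular long exact sequence of $(\cl x_0,\mo x_0)$ then forces $H(\cl x_0,\mo x_0)\cong R$ concentrated in degree $\dim x_0$, matching the Lefschetz side. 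In effect the hypothesis is exactly the acyclicity-on-models condition ensuring that every closed cell, relative to its mouth, carries a single homology generator in its own dimension in both theories.

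It remains to organize these identifications into genuine isomorphisms, and this is the step I expect to be the main obstacle: the five lemma needs a map of long exact sequences, not merely an abstract agreement of the terms. I would therefore construct a natural chain map $\Phi_A\colon C^\kappa(A)\to C(\abs{\cK(A)})$, defined for every closed subcomplex $A$ and compatible with inclusions and with the quotients of Theorem~\ref{thm:long_exact_lef}, by sending each cell $x$ to a fundamental relative cycle of $(\cK(\cl x),\cK(\mo x))$ built inductively on dimension. The chain-map identity $\partial\Phi(x)=\Phi(\partial^\kappa x)$ reduces to filling a cycle in $\cK(\mo x)$ by a chain in the contractible complex $\cK(\cl x)$; contractibility supplies the filling in positive degrees, while augmentability of $X$ is precisely what guarantees the filling in degree zero, where the relevant $0$-chain has vanishing augmentation. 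Naturality with respect to closed inclusions is automatic because $\cl_A x=\cl_X x$ whenever $A$ is closed. Composing $\Phi$ with the McCord isomorphism of Corollary~\ref{cor:mccord} produces the required commuting ladders; running the five lemma first on $(\cl x_0,\mo x_0)$ and then on $(X,X')$ completes the induction. The delicate points to verify are the well-definedness and naturality of the fundamental chains and the inheritance of augmentability by $\mo x_0$ and $X'$; alternatively one could bypass the explicit construction of $\Phi$ by an acyclic-models argument, the two acyclicity conditions being supplied respectively by the hypothesis and by Theorem~\ref{thm:contractible_closure}.
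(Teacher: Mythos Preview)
You have proved the wrong theorem. The statement in question is Theorem~\ref{thm:lefschetz_excision}, a purely algebraic excision result asserting that $H^\kappa(X,X')\cong H^\kappa(X\setminus X')$ for a closed $X'\subset X$. Your proposal, however, is an argument for Theorem~\ref{thm:main_thm}: it invokes augmentability, the acyclicity hypothesis~(\ref{eq:main-thm-eqn}), McCord's theorem, and even Theorem~\ref{thm:lefschetz_excision} itself as a tool. None of these are relevant to, or available for, the statement you were asked to prove; in particular, using Theorem~\ref{thm:lefschetz_excision} in its own proof is circular.

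The paper does not supply a proof of Theorem~\ref{thm:lefschetz_excision}; it is quoted from \cite[Theorem~3.5]{MB}. The underlying argument is elementary linear algebra: since $X'$ is closed, $C^\kappa(X')$ is a subcomplex of $C^\kappa(X)$, and the obvious module splitting $C^\kappa(X)=C^\kappa(X')\oplus C^\kappa(X\setminus X')$ identifies the quotient $C^\kappa(X)/C^\kappa(X')$ with $C^\kappa(X\setminus X')$ as graded modules. One then checks that the induced boundary on the quotient coincides with $\partial^\kappa$ restricted to $X\setminus X'$ (the terms of $\partial^\kappa x$ landing in $X'$ are killed by the quotient, and what remains is exactly the Lefschetz boundary in the open, hence locally closed, subcomplex $X\setminus X'$). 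This gives a chain isomorphism, hence an isomorphism in homology. No induction, no singular chains, and no hypotheses beyond closedness of $X'$ are needed.
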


\section{Main result}
Let $(X, \kappa)$ be a Lefschetz complex over a ring $R$. We say that $(X, \kappa)$ is \emph{augmentable}, if the linear map $\epsilon^\kappa:C_0^\kappa \rightarrow R$ defined on the basis elements $x \in X_0$ by $\epsilon^\kappa(x) = 1$ satisfies $\epsilon^\kappa \partial_1^\kappa = 0$. The following lemma may be deduced from the Acyclic Carrier Theorem \cite[Thm. 13.4]{Munkres}. 
We include a direct proof for convenience.
\begin{lem}
\label{lem:main_lemma}
Assume that $(X, \kappa)$ is an augmentable Lefschetz complex. Then there exists a chain map $\varphi:C^\kappa(X) \rightarrow C(X)$ from Lefschetz chains to singular chains such that $\varphi_0$ on the basis element $x \in X_0$ is the singular chain 
\begin{equation} \label{eq:lemma_help_eq}
c_x : \Delta _0 = \{ 1 \} \ni 1 \mapsto x \in X_0
\end{equation}
and
\begin{equation} \label{eq:lemma_eq}
\forall{c \in C^\kappa(X)} \ |\varphi(c)| \subset \cl |c|.
\end{equation}
\end{lem}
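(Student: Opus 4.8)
The plan is to build the maps $\varphi_n$ by induction on the dimension $n$, defining each $\varphi_n$ on the basis elements $x\in X_n$, extending $R$-linearly, and arranging throughout that the singular chain $\varphi_n(x)$ is carried by the closure $\cl x$. In dimension $0$ I am forced to take $\varphi_0(x)=c_x$ as in \eqref{eq:lemma_help_eq}. Here both requirements hold for free: $|c_x|=\{x\}\subset\cl x$, and since there are no cells of negative dimension we have $\partial^\kappa x=0$ for $x\in X_0$, so the chain map identity is vacuous at this level. Extending linearly and using that closure commutes with finite unions, the support estimate $|\varphi_0(c)|\subset\cl|c|$ holds for every $c\in C_0^\kappa(X)$.

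For the inductive step I would assume $\varphi_k$ already defined for all $k<n$, commuting with the boundary operators and satisfying $|\varphi_k(c)|\subset\cl|c|$. Fixing $x\in X_n$, I set $z:=\varphi_{n-1}(\partial^\kappa x)$. Since $\varphi_{n-1}$ is a chain map and $\partial^\kappa\partial^\kappa=0$, the chain $z$ is a cycle. Moreover every facet $y\prec x$ satisfies $y\le x$ with $y\neq x$, so $y\in\cl x\setminus\{x\}=\mo x$; hence $|\partial^\kappa x|\subset\cl x$ and, by the inductive support hypothesis, $|z|\subset\cl|\partial^\kappa x|\subset\cl x$. Thus $z$ is a cycle of the singular chain complex $C(\cl x)$ of the subspace $\cl x$. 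By Theorem~\ref{thm:contractible_closure} the finite $T_0$ space $\cl x$ is contractible, so its reduced singular homology vanishes in every degree. Consequently $z$ bounds inside $C(\cl x)$: there is $b\in C_n(\cl x)$ with $\partial b=z$, and I define $\varphi_n(x):=b$. This gives $\partial\varphi_n(x)=\varphi_{n-1}(\partial^\kappa x)$, the chain map identity on the generator $x$, while $|\varphi_n(x)|=|b|\subset\cl x$; extending linearly then yields $|\varphi_n(c)|\subset\cl|c|$ for all $c\in C_n^\kappa(X)$.

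The one delicate case, and the only place augmentability enters, is $n=1$, where $z$ is a $0$-cycle. A $0$-cycle of the path-connected space $\cl x$ is a boundary in $C(\cl x)$ exactly when its augmentation vanishes; here $z=\varphi_0(\partial^\kappa x)=\sum_y\kappa(x,y)c_y$ has augmentation $\sum_y\kappa(x,y)=\epsilon^\kappa(\partial_1^\kappa x)$, which is $0$ precisely by the augmentability hypothesis. For $n\ge 2$ no extra condition is needed, as $z$ is a positive-degree cycle and the vanishing of $H_{n-1}(\cl x)$ already forces it to bound. I expect the main obstacle to be ensuring that $z$ bounds not merely in $C(X)$ but within the smaller complex $C(\cl x)$, since it is only the latter that preserves the support estimate \eqref{eq:lemma_eq}; this is exactly what contractibility of $\cl x$ supplies, with augmentability resolving the residual degree-$0$ obstruction.
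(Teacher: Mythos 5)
Your proposal is correct and follows essentially the same route as the paper's proof: induction on the degree, defining $\varphi_n(x)$ as a chain in $C(\cl x)$ bounding the cycle $\varphi_{n-1}(\partial^\kappa x)$, using contractibility of $\cl x$ (Theorem~\ref{thm:contractible_closure}) for $n\ge 2$ and augmentability to kill the degree-zero obstruction when $n=1$. The support estimate via linear extension and finite unions of closures is also exactly the paper's argument.
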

\begin{proof}
For $k < 0$ we set $\varphi_k := 0$ and we define $\varphi_0$ on the basis element $x \in X_0$ as $c_x \in C_0(X)$ where $c_x$ is given by (\ref{eq:lemma_help_eq}). Obviously $\partial_0 \varphi_0 = 0 = \varphi_{-1} \partial_0^\kappa$. Also (\ref{eq:lemma_eq}) is trivially satisfied for $k = 0$.
\par 
Thus, consider $k > 0$ and assume that $\varphi:C_{k-1}^\kappa(X) \rightarrow C_{k-1}(X)$ satisfying $\partial_{k-1} \varphi_{k-1} = \varphi_{k-2} \partial_{k-1}^\kappa$ and (\ref{eq:lemma_eq}) is already defined. For $x \in X_k$ set $u_x := \varphi_{k-1}\partial_k^\kappa x \in C_{k-1}(X)$. Then $\partial_{k-1}u_x = \partial_{k-1}\varphi_{k-1}\partial_k^\kappa x = \varphi_{k-2} \partial_{k-1}^\kappa\partial_k^\kappa x = 0$. Hence, $u_x$ is a singular $(k-1)$-cycle. Since $\varphi_{k-1}$ satisfies (\ref{eq:lemma_eq}), we also have $|u_x| = |\varphi_{k-1}\partial_k^\kappa x| \subset \cl |\partial_k^\kappa x| \subset \cl \cl x = \cl x$. It follows that $u_x \in Z_{k-1}(\cl x)$. If $k > 1$, then $u_x \in B_{k-1}(\cl x)$, because by Theorem \ref{thm:contractible_closure} the set $\cl x$ is contractible, which means that $H_l(\cl x) = 0$ for $l > 0$. Hence, consider the case $k = 1$. Let $\epsilon :C_0(X) \rightarrow R$ denote the augmentation for the singular chains. Then, for $x \in X_0$ we have $\epsilon \varphi_0 x = \epsilon c_x = 1 = \epsilon^\kappa x$. It follows, that we have $\epsilon \varphi_0 = \epsilon^\kappa$ and we get $\epsilon u_x = \epsilon \varphi_0 \partial_1^\kappa x = \epsilon^\kappa \partial_1^\kappa x = 0$ by the assumed augmentability of $(X, \kappa)$. Moreover, since $\cl x$ is contractible, we have $\ker \epsilon = \im \partial_1$. It means that $u_x \in B_0(\cl x)$.
\par
Thus, we have proved that for every $k > 0$ and $x \in X_k$ there exists a chain $s_x \in C_k(\cl x)$ such that $u_x = \partial s_x$. Set $\varphi_k(x) := s_x$ for $x \in X_k$ and extend it linearly to $\varphi_k:C_k^\kappa(X) \rightarrow C_k(X)$. We have $|\varphi _k(x)| = |s_x| \subset \cl x$ and $\partial_k \varphi_k x = \partial_k s_x = u_x = \varphi_{k-1} \partial_k x$ for every $x \in X_k$. Now consider a Lefschetz chain $c = \sum_{i = 1}^n \alpha_i x_i \in C_k^\kappa(X)$ where $n \in \mathbb{N}, \alpha_i \in R, x_i \in X_k$ for $i = 1,...,n$. We have $|\varphi_k(c)| \subset \bigcup_{i = 1}^n |\varphi_k (x_i)| \subset \bigcup_{i = 1}^n \cl x_i = \cl \bigcup_{i = 1}^n \{x_i\} = \cl |c|$. Therefore, $\varphi_k$ satisfies (\ref{eq:lemma_eq}). We also have $\partial_k \varphi_k (c) = \sum_{i=1}^n \alpha_i \partial_k \varphi_k(x_i) = \sum_{i=1}^n \alpha_i \varphi_{k - 1} \partial_k (x_i) = \varphi_{k-1} \partial_k \sum_{i=1}^n \alpha_i x_i = \varphi_{k-1} \partial_k c$. This concludes the proof.
\qed
\end{proof}

Now we are ready to state the main theorem of this paper.

\begin{thm} \label{thm:main_thm}
Let $(X, \kappa)$ be an augmentable Lefschetz complex such that for every $x \in X$ we have
\begin{equation}
\label{eq:main-thm-eqn}
H^\kappa_n(\cl x) = 
\begin{cases}
R & n = 0, \\
0 & \textnormal{otherwise.}
\end{cases}
\end{equation}
Then $H^\kappa(X) \cong H(X)$.
\end{thm}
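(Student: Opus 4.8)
The plan is to strengthen the statement and prove, by induction on the number of cells $|X|$, that \emph{every} chain map $\varphi\colon C^\kappa(X)\to C(X)$ satisfying the conclusions of Lemma~\ref{lem:main_lemma} induces an isomorphism $\varphi_*\colon H^\kappa(X)\to H(X)$; the theorem then follows, since at least one such $\varphi$ exists by that lemma. The inductive engine is the comparison, term by term, of the Lefschetz and singular long exact sequences of a suitable pair, glued together by $\varphi$. The argument rests on the remark that if $Y\subset X$ is closed then $C^\kappa(Y)$ is a subcomplex, $\varphi$ restricts to a chain map $C^\kappa(Y)\to C(Y)$ of the same type (because $|\varphi(c)|\subset\cl|c|\subset Y$ by \eqref{eq:lemma_eq}), and $Y$ is again augmentable and satisfies \eqref{eq:main-thm-eqn} (since $\cl_Y y=\cl_X y$ for $y\in Y$, using Theorem~\ref{cor:lef_subcomplex}). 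Thus the inductive hypothesis will apply to every closed subcomplex with fewer cells.

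A nonempty finite poset either has a greatest element or has at least two maximal elements. If $X$ has a greatest element $x_0$, then $\cl x_0=X$, so the hypothesis gives that $H^\kappa(X)=H^\kappa(\cl x_0)$ is free of rank one in degree $0$ and trivial otherwise, while $|\cK(X)|$ is a cone with apex $x_0$, hence contractible, so that $H(X)\cong H(|\cK(X)|)$ is likewise concentrated in degree $0$ by Corollary~\ref{cor:mccord}. In this case I would argue directly: the augmentations descend to surjections $\bar\epsilon^\kappa\colon H_0^\kappa(X)\to R$ and $\bar\epsilon\colon H_0(X)\to R$ which send the class of a $0$-cell to $1$, and are therefore isomorphisms of free rank-one modules; the identity $\epsilon\varphi_0=\epsilon^\kappa$ established inside the proof of Lemma~\ref{lem:main_lemma} yields $\bar\epsilon\,\varphi_*=\bar\epsilon^\kappa$, so $\varphi_*=\bar\epsilon^{-1}\bar\epsilon^\kappa$ is an isomorphism. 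This settles the base cases, in particular $|X|\le 1$.

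If $X$ has at least two maximal elements, I choose a maximal cell $x_0$; then $X':=X\setminus\{x_0\}$ is a lower set, hence closed, and $\cl x_0\subsetneq X$ because the remaining maximal cells do not lie below $x_0$. Since $\{x_0\}$ is open it is locally closed, so its mouth $\mo x_0=\cl x_0\setminus\{x_0\}$ is closed. Now $\varphi$ carries the short exact sequence $0\to C^\kappa(X')\to C^\kappa(X)\to C^\kappa(X)/C^\kappa(X')\to 0$ of Theorem~\ref{thm:long_exact_lef} to the corresponding sequence of singular chains, producing a commutative ladder of long exact sequences of the pair $(X,X')$. By the inductive hypothesis $\varphi_*$ is an isomorphism on $H^\kappa(X')\to H(X')$, so by the five lemma it suffices to show that the induced map $\bar\varphi_*\colon H^\kappa(X,X')\to H(X,X')$ on the relative terms is an isomorphism. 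To this end I pass to the pair $(\cl x_0,\mo x_0)$. The inclusion $(\cl x_0,\mo x_0)\hookrightarrow(X,X')$ induces $\bar\varphi$-compatible maps on both relative homologies: on the Lefschetz side both quotient complexes are freely generated by $x_0$ with zero differential, so the induced map is the identity on $R$; on the singular side, the decomposition $\cK(X)=\cK(X')\cup\cK(\cl x_0)$ with $\cK(X')\cap\cK(\cl x_0)=\cK(\mo x_0)$, together with simplicial excision (Theorem~\ref{thm:excision_simplicial}) and the relative McCord isomorphism (Corollary~\ref{cor:rel_mccord}), shows that $H(\cl x_0,\mo x_0)\to H(X,X')$ is an isomorphism. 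Hence $\bar\varphi_*$ on $(X,X')$ is an isomorphism if and only if it is one on $(\cl x_0,\mo x_0)$, and for the latter I apply the five lemma to the ladder of long exact sequences of $(\cl x_0,\mo x_0)$: both $\cl x_0$ and $\mo x_0$ are closed subcomplexes with strictly fewer cells, so by induction $\varphi_*$ is an isomorphism on each of them, forcing $\bar\varphi_*$ on the relative term to be an isomorphism as well.

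The main obstacle is not the abstract bookkeeping but ensuring that all of these isomorphisms are genuinely \emph{realized by} $\varphi$: every square in the two ladders and in the excision comparison must commute, which is why it is essential that a single chain map $\varphi$ satisfying the support condition \eqref{eq:lemma_eq} be used throughout, and that the inductive statement quantify over all such chain maps so that its restrictions to $X'$, $\cl x_0$ and $\mo x_0$ qualify. Note that the five lemma, rather than any explicit homology computation, is what converts the acyclicity of each closed cell into the matching of the two theories; the only step carried out by hand is the degree-zero comparison in the greatest-element case, where the augmentability hypothesis enters precisely through $\epsilon\varphi_0=\epsilon^\kappa$.
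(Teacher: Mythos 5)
Your proof is correct and follows essentially the same route as the paper's: induction on $\card X$, removal of a maximal cell, comparison of the Lefschetz and singular long exact sequences of $(X,X')$ via the chain map of Lemma~\ref{lem:main_lemma}, reduction of the relative terms to $(\cl a,\mo a)$ by Lefschetz excision and simplicial excision combined with the relative McCord isomorphism, and two applications of the Five Lemma. The only organizational difference is that you split off the case of a greatest element (settled by the degree-zero augmentation comparison $\epsilon\varphi_0=\epsilon^\kappa$) so that otherwise $\cl a$ is a proper closed subcomplex handled by induction, whereas the paper treats $\cl a$ in every step directly from hypothesis (\ref{eq:main-thm-eqn}) and the contractibility of $\cl a$; your explicit quantification of the inductive statement over all chain maps satisfying the lemma makes precise a point the paper leaves implicit.
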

\begin{proof}
Let $\varphi: C^\kappa(X) \rightarrow C(X)$ be a chain map such as in Lemma \ref{lem:main_lemma}. We will prove by induction with respect to $k := \card X$ that $\varphi_*$ is an isomorphism. If $k = 1$, then the only element in $X$ is of dimension zero, because otherwise (\ref{eq:main-thm-eqn}) is not satisfied and the conclusion follows immediately. Assume now that $k > 1$ and that thesis holds for the Lefschetz complexes of cardinality $k' < k$. Let $a$ be a maximal element in $X$ with respect to $\leq$ and set $X' := X \setminus \{a\}$. Then $X'$ is closed in $X$. Thus, by  Theorem \ref{cor:lef_subcomplex} we have a well defined Lefschetz complex $(X', \kappa_{X' \times X'})$. It is augmentable, because its boundary operator is the restriction of the boundary operator on $X$, and it satisfies (\ref{eq:main-thm-eqn}), because maximality of $a$ implies that for every $x \in X'$ its closure is the same in $X$ and $X'$. Therefore, $(X', \kappa_{X' \times X'})$ satisfies the inductive assumption.
\par 
Observe that since \ref{eq:lemma_eq} holds, $\varphi(C^\kappa(X')) \subset C(X')$ and consequently $\varphi$ induces a chain map $\varphi_*: H^\kappa(X, X') \rightarrow H(X, X')$. Thus, by Lemma \ref{lem:main_lemma} and Theorem \ref{thm:long_exact_lef} 
we have the following commutative diagram with exact rows

\[
\begin{diagram}
\label{dg:main-thm-diagram}
	\node{\ldots} \arrow{e,t}{\partial_*} \node{H_n(X')} \arrow{e,t}{i_*}  \node{H_n(X)} \arrow{e,t}{j_*} \node{H_n(X, X')} \arrow{e,t}{\partial_*} \node{\ldots} \\
	\node{\ldots} \arrow{e,t}{\partial_*} \node{H^\kappa_n(X')} \arrow{e,t}{i_*}\arrow{n,r}{\varphi_*}  \node{H^\kappa_n(X)} \arrow{e,t}{j_*}\arrow{n,r}{\varphi_*} \node{H^\kappa_n(X, X')} \arrow{e,t}{\partial_*}\arrow{n,r}{\varphi_*} \node{\ldots} 
\end{diagram}.
\]

\par
By the induction assumption $\varphi_*: H_n^\kappa(X') \rightarrow H_n(X')$ is an isomorphism for every $n \in \mathbb{Z}$. 
\par
By Corollary \ref{cor:rel_mccord} we have $H_n(X, X') \cong H_n(|\mathcal{K}(X)|,| \mathcal{K}(X')|)$. Observe that $\mathcal{K}(\cl a)$ is a subcomplex of $\mathcal{K}(X)$ such that $\mathcal{K}(X) = \mathcal{K}(X') \cup \mathcal{K}(\cl a)$. Hence, $H_n(\mathcal{K}(X), \mathcal{K}(X')) \cong H_n(\mathcal{K}(\cl a), \mathcal{K}(\cl a) \cap \mathcal{K}(X')) = H_n(\mathcal{K}(\cl a), \mathcal{K}(\mo a))$ by Theorem \ref{thm:excision_simplicial}. Thus, using Corollary \ref{cor:rel_mccord} again we get $H_n(X, X') \cong H_n(\cl a, \mo a)$. We also get from Theorem \ref{thm:lefschetz_excision} that 
$$H_n^\kappa(X, X') \cong H_n^\kappa(X \setminus X') = H_n^\kappa(a) = H_n^\kappa(\cl a \setminus \mo a) \cong H_n^\kappa(\cl a, \mo a).$$
Now consider the diagram
\[
\begin{diagram}
	\node{\ldots} \arrow{e,t}{\partial_*} \node{H_n(\mo a)} \arrow{e,t}{i_*}  \node{H_n(\cl a)} \arrow{e,t}{j_*} \node{H_n(\cl a, \mo a)} \arrow{e,t}{\partial_*} \node{\ldots} \\
	\node{\ldots} \arrow{e,t}{\partial_*}  \node{H^\kappa(\mo a)} \arrow{e,t}{i_*} \arrow{n,r}{\varphi_*}  \node{H^\kappa_n(\cl a)} \arrow{e,t}{j_*}\arrow{n,r}{\varphi_*} \node{H^\kappa_n(\cl a, \mo a)} \arrow{e,t}{\partial_*} \arrow{n,r}{\varphi_*} \node{\ldots} 
\end{diagram}.
\]

The homomorphism $\varphi_*: H_{n}^\kappa(\mo a) \rightarrow H_{n}(\mo a)$ is an isomorphisms for $n \in \mathbb{Z}$ by the induction assumption. For $n \in \mathbb{Z} \setminus \{1\}$ map $\varphi_* : H_n^\kappa(\cl x) = 0 \rightarrow H_n(\cl x) = 0$ is obviously an isomorphism and for $n = 0$ map $\varphi_* : H_n^\kappa(\cl x) = R \rightarrow H_n(\cl x) = R$ is an isomorphism straightforwardly from the definition of $\varphi$. By the Five Lemma (see \cite[Lemma 7.1]{Ma1991}) it follows that $\varphi_*: H_{n}^\kappa(\cl a, \mo a) \rightarrow H_{n}(\cl a, \mo a)$ is an isomorphism. Again by the Five Lemma we obtain from diagram \ref{dg:main-thm-diagram} that $\varphi_*: H_n^\kappa(X) \rightarrow H_n(X)$ is also an isomorphisms. This completes the proof.
\qed
\end{proof}

Note that a closed subcomplex of the augmentable Lefschetz complex is itself augmentable. Thus, we easily obtain the following corollary

\begin{cor}
\label{cor:main-thm-cor}
Let $(X, \kappa)$ be an augmentable Lefschetz complex. Then $(X, \kappa)$ satisfies (\ref{eq:main-thm-eqn}) if and only if for every closed subcomplex $X' \subset X$ we have $H^\kappa(X') \cong H(X')$.
\end{cor}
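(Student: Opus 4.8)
The plan is to prove the two implications separately, both building on the remark preceding the corollary that a closed subcomplex of an augmentable Lefschetz complex is again augmentable, together with the observation that closures of points do not change when one passes to a closed subcomplex.

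First I would treat the forward implication. Assuming $(X,\kappa)$ satisfies (\ref{eq:main-thm-eqn}), I fix an arbitrary closed subcomplex $X' \subset X$. By Theorem \ref{cor:lef_subcomplex} the pair $(X', \kappa\restriction_{X'\times X'})$ is a Lefschetz complex, and it is augmentable since its boundary operator is the restriction of $\partial^\kappa$. The one step that needs checking is that $X'$ still satisfies (\ref{eq:main-thm-eqn}): for any $x \in X'$, closedness of $X'$ forces $\cl x \subset X'$, so the closure of $x$, and hence the Lefschetz homology $H^\kappa(\cl x)$, is the same whether computed in $X$ or in $X'$. Thus $(X', \kappa\restriction_{X'\times X'})$ satisfies all the hypotheses of Theorem \ref{thm:main_thm}, which immediately yields $H^\kappa(X') \cong H(X')$.

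For the reverse implication I would assume that $H^\kappa(X') \cong H(X')$ holds for every closed subcomplex $X' \subset X$ and fix an arbitrary $x \in X$. Since $\cl x$ is closed it is in particular locally closed, so by Theorem \ref{cor:lef_subcomplex} it is itself a Lefschetz complex and hence a closed subcomplex of $X$. Applying the hypothesis to $X' := \cl x$ gives $H^\kappa_n(\cl x) \cong H_n(\cl x)$ for every $n$. Finally, by Theorem \ref{thm:contractible_closure} the space $\cl x$ is contractible, so its singular homology coincides with that of a one-point space, namely $R$ in degree $0$ and $0$ otherwise. Combining these two facts gives exactly (\ref{eq:main-thm-eqn}).

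Neither direction presents a genuine obstacle once the main theorem and the contractibility of closures are available, so the corollary is essentially a repackaging of Theorem \ref{thm:main_thm}. The only point I would flag as requiring care is the consistency of closures used in the forward direction: because the hypotheses of Theorem \ref{thm:main_thm} are phrased through closures of points, one must verify that restricting to $X'$ leaves these closures unchanged, and this is precisely where the closedness of $X'$ — rather than mere local closedness — is essential.
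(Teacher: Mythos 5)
Your proof is correct and fills in exactly the argument the paper leaves implicit: the paper only remarks that closed subcomplexes of an augmentable complex are augmentable and then declares the corollary ``easily obtained,'' and your two implications (applying Theorem \ref{thm:main_thm} to $X'$ after checking that closures of points are unchanged in a closed subcomplex, and combining the hypothesis for $X'=\cl x$ with the contractibility of $\cl x$ from Theorem \ref{thm:contractible_closure}) are precisely the intended route. Your flag about why closedness, not mere local closedness, of $X'$ is needed is also well placed.
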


Corollary \ref{cor:main-thm-cor} does not resolve the problem whether inverse to Theorem \ref{thm:main_thm} is true. This question remains open.
\par
Now we provide some examples, which show that assumptions of Theorem \ref{thm:main_thm} are essential. Here we take our ring to be $R = \ZZ$.

\begin{ex} \label{ex1}
Consider a Lefschetz complex $(X, \kappa)$ with $X_0 = \{a,b,c,d\}$, $X_1 = \{e\}$ and incidence coefficients $\kappa(e, a) = 1, \kappa(e, b) = 1, \kappa(e, c) = -1, \kappa(e, d) = -1$ given by the matrix
\begin{center}
  \begin{tabular}{ c | r }
      & e \\ \hline
    a & 1 \\ \hline
    b & 1 \\ \hline
    c & -1 \\ \hline
    d & -1 \\
  \end{tabular}.
\end{center}
This Lefschetz complex and the associated simplicial complex is visualized in Figure 1. It is elementary to observe that $X$ is augmentable and
\begin{equation*}
H^\kappa_n(\cl e) = H^\kappa_n(X) =
\begin{cases}
R^3 & n = 0, \\
0 & \textnormal{otherwise.}
\end{cases},
\end{equation*}
However, by Theorem \ref{thm:contractible_closure} we have
\begin{equation*}
H_n(X) =
\begin{cases}
R & n = 0, \\
0 & \textnormal{otherwise.}
\end{cases}
\end{equation*}

\end{ex}

\begin{ex} \label{ex2}
Consider a Lefschetz complex $(X, \kappa)$ with $X_0 = \{a,b\}$, $X_1 = \{c, d\}$ and incidence coefficients given by the matrix

\begin{center}
  \begin{tabular}{ l | c | r  }
      & c & d \\ \hline
    a & 1 & 1 \\ \hline
    b & -1 & 1 \\
  \end{tabular}
\end{center}
This Lefschetz complex and the associated simplicial complex is visualized in Figure 1. One easily verifies that $X$ satisfies assumption (\ref{eq:main-thm-eqn}), but it is not augmentable. This Lefschetz complex has no non-trivial cycles in dimension $1$. Indeed, assume that $z := \gamma c + \delta d \in Z_1^\kappa(X)$ where $\gamma, \delta \in \ZZ$. It follows that $0 = \partial z = (\gamma + \delta) a + (\delta - \gamma) b$. This is true only if $\gamma = \delta = 0$. Thus, $z = 0$ and consequently $H_1^\kappa(X) = 0$. However, by Corollary (\ref{cor:mccord}) we easily obtain that $H_1(X) \cong H_1(\mathbb{S}^1) \cong R$.
\end{ex}

\begin{figure}[h!]
\includegraphics[width=1\textwidth]{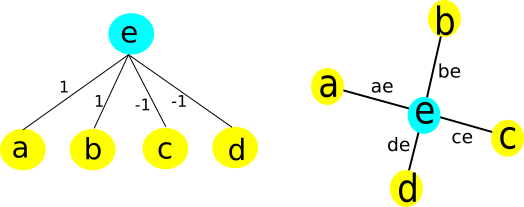}

\caption{Left: poset visualization of Example \ref{ex1}. Right: simplicial complex associated with it. The set on the right is contractible, as one would expect.}
\end{figure}

\begin{figure}[h!]
\includegraphics[width=1\textwidth]{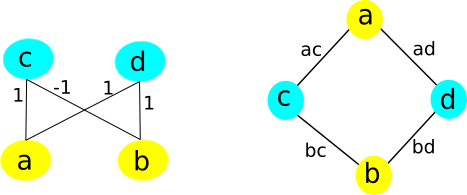}

\caption{Left: poset visualization of Example \ref{ex2}. Right: simplicial complex associated with it.}
\end{figure}

\end{document}